\documentclass[11pt]{amsart}

\usepackage{amsmath,amssymb,amsthm}
\usepackage{graphicx}
\usepackage{url}
\usepackage{hyperref}

\newtheorem{theorem}{Theorem}
\newtheorem{corollary}[theorem]{Corollary}

\begin{document}

\title[Windowed Square Waves, $\tanh$, $\zeta(2k)$, and $L$-Values]{From a Windowed
Square Wave to the Hyperbolic Tangent, Even Zeta Values, and Dirichlet $L$-Values}

\author{Peter J. Bevelacqua, Ph.D. }
\email{pjbevel@gmail.com}

\maketitle

\begin{abstract}
  We window a delayed periodic square wave with a decaying signum function and evaluate the product at zero frequency in both the time and frequency domains. The window renders every series that appears absolutely convergent, so the argument requires only elementary integrals and a limit. We
  obtain new unified derivations for the Mittag-Leffler partial-fraction expansion for the hyperbolic tangent, the Fourier expansion 
  of the first Euler polynomial $E_1$, and the Basel sum $\zeta(2)$. The same calculation, followed by termwise integration, yields the Dirichlet beta value $\beta(3)=\pi^{3}/32$, and the values at $s=3$ of two other Dirichlet $L$-functions. Termwise integration of this sine series produces $\zeta(4)$, and an induction continues the process to $\zeta(2k)$ for every $k\ge1$. 
\end{abstract}

\section{Introduction}
In this article, we window a delayed periodic square wave with a decaying
signum function and compute the total integral of the product. This product is evaluated twice: once in the frequency domain and once
by elementary integration in the time domain. Equating the two answers
yields Theorem~\ref{thm:main}, a two-parameter cosine identity over the
odd integers. The identity itself is classical (it is formula~1.55,
p.~13, of Oberhettinger's tables \cite{Oberhettinger}), but the rest of
the article unfolds from it by specialization and termwise integration.
Freezing the delay gives the Mittag-Leffler partial-fraction expansion
of the hyperbolic tangent \cite{WhittakerWatson}, with no appeal to
complex analysis; sending the decay rate to zero instead gives the
Fourier expansion of the first Euler polynomial $E_1$
\cite{AbramowitzStegun,ApostolANT} and, at zero delay, the Basel sum
$\zeta(2)$. Section~3 then climbs an integral ladder, where termwise
integration divides the summand by $n$ and alternates the series between
sine and cosine. Evaluating the rungs of this ladder at the shifts
$d=\tfrac14,\tfrac16,$ and $\tfrac18$ produces the Dirichlet beta value
$\beta(3)=\pi^{3}/32$ and the values at $s=3$ of two further Dirichlet
$L$-functions attached to odd characters of modulus $6$ and $8$, while
$d=0$ produces $\zeta(4)$. The section closes with a short induction
(Theorem~\ref{thm:zeta2k}) extending the ladder to $\zeta(2k)$ for
every $k\ge 1$.

The original evaluation of $\zeta(2)$ and $\zeta(2k)$ goes back to Euler
\cite{Euler1740,Dunham1999}, and dozens of proofs have accumulated
since, catalogued and compared in the surveys
\cite{Chapman1999,Kalman1993,AignerZiegler}. Nearest to this article's
approach are the Fourier-analytic ones, in which $\zeta(2)$ falls out
of the Fourier series of a triangle wave \cite{SteinShakarchi}. Every
individual result obtained here is well known. What is believed to be
new is the route, where a single windowed signal whose total integral,
computed two ways, generates them all.

An important feature of the decaying signum window is that it
improves the decay of the spectral terms from $1/n$ to $1/n^{2}$, so
every series that appears converges absolutely. This article's argument uses
 elementary integrals, the classical pointwise convergence and
uniform boundedness of the partial sums of the square wave's Fourier
series, dominated convergence, and Maclaurin expansions, but never
Parseval's theorem or the $L^{2}$ theory of Fourier series. Throughout,
Fourier transforms are written in the engineering convention, in terms
of the ordinary frequency $f$.

\section{Windowed Square Wave in the Time and Frequency Domains}

In signal processing courses where functions of time and their corresponding 
representation in the frequency domain are studied, it is well known that the total 
integral of a function, 
$\int_{-\infty}^{\infty}x(t)\,dt$, is equivalent to the Fourier transform of the function evaluated at $f=0$. Our plan is to analyze a waveform by calculating this value in two
independent ways, once in the frequency domain and once by elementary integration 
in the time domain. 
We will then set them equal and see what we can learn. 

We first consider the zero-mean periodic square wave $s(t)$ defined by

\begin{equation}
s(t)=
\begin{cases}
+1, & 0 \leq t<\tfrac12,\\[2pt]
-1, & \tfrac12 \leq t<1,
\end{cases}
\qquad\qquad s(t+1)=s(t).
\label{eq:s} 
\end{equation}

Next we look at the signum function, with an exponential taper of
decay rate $a>0$,

\begin{equation}
\operatorname{sgn}(a, t)=
\begin{cases}
e^{-at}, & 0 \leq t ,\\[2pt]
-e^{at}, & t<0.
\end{cases}
\label{eq:sgn} 
\end{equation}

We study the product $p(a, d, t)=s(t-d)\operatorname{sgn}(a,t)$,
writing Fourier transforms in the engineering convention
$X(f)=\int_{-\infty}^{\infty}x(t)\,e^{-2\pi i f t}\,dt $. The original motivation behind 
the delay $d$ will be discussed in Section~3. The introduction of the parameter $d$, while seemingly 
of minor consequence, will turn into the key building block for the paper.
Figure~\ref{fig:construction} shows the two factors and their product.

\begin{figure}[htb]
\centering
\includegraphics[width=0.8\linewidth]{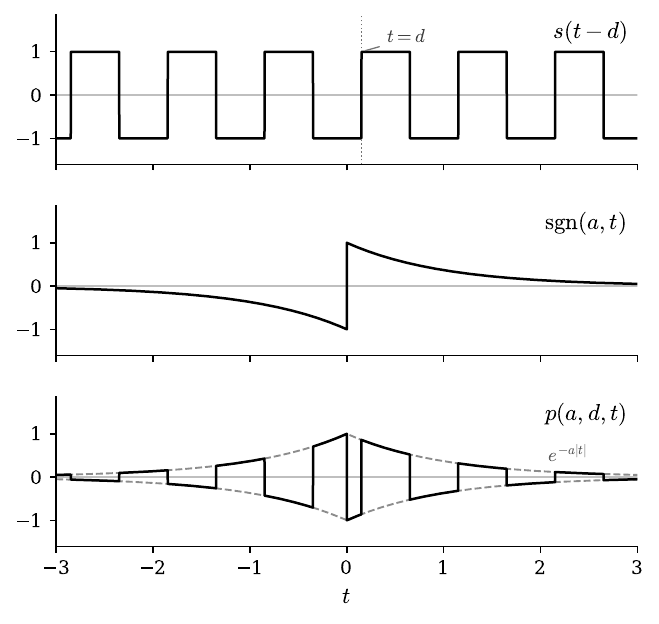}
\caption{The construction behind Theorem~\ref{thm:main}, drawn for
$a=1$ and $d=0.15$: the delayed square wave $s(t-d)$ (top), the
decaying signum window $\operatorname{sgn}(a,t)$ (middle), and their
product $p(a,d,t)$ (bottom), whose total integral---its value at zero
frequency---is computed twice. Dashed curves show the envelope
$\pm e^{-a|t|}$.}
\label{fig:construction}
\end{figure}

The multiplication of the two functions serves two purposes. First, 
it makes the total integral of the product nonzero for $d=0$. Second, it results in an 
absolutely convergent sum for all $f$, allowing for a rigorous calculation of the integral.

\subsection*{The frequency domain.}

We will obtain the frequency domain representations of $s(t)$ and sgn$(a,t)$. The Fourier series representation of the
square wave, $s(t)=\sum_{n}s_ne^{i2\pi n t}$, has Fourier coefficients
$s_n$ given by

\begin{equation}
s_n=
\begin{cases}
\dfrac{2}{i\pi n}, & n \text{ odd},\\[6pt]
0, & n \text{ even}.
\end{cases}
\label{eq:sqcoef}
\end{equation}

The Fourier transform of the decaying signum function can be directly
calculated:

\begin{equation}
\operatorname{SGN}(a, f)
= \int_{-\infty}^{0} (-e^{at}) e^{-i2\pi f t} \, dt + \int_{0}^{\infty} 
e^{-at} e^{-i2\pi f t} \, dt.
\label{eq:sqn_def}
\end{equation}

Equation \eqref{eq:sqn_def} can be simplified to

\begin{equation}
\operatorname{SGN}(a, f)= \dfrac{-4 \pi i f}{a^2+4\pi^2f^2}
\label{eq:sgn_f}
\end{equation}

To obtain the Fourier transform of the product, we will need to swap the order 
of integration and summation, and then integrate $s(t-d)$ term by term.
To see that this is mathematically sound, let

\begin{equation}
s_N(t)=\sum_{|n|\le N} s_n\,e^{i2\pi nt}
\qquad N=1,3,5,\dots
\label{eq:partial}
\end{equation}

denote the partial sums, so that

\begin{equation}
\int_{-\infty}^{\infty} s(t-d)\operatorname{sgn}(a,t)e^{-i2\pi f t}\,dt
=\int_{-\infty}^{\infty}\lim_{N\to\infty}\sum_{|n|\le N} e^{i2\pi n (t-d)}\, s_n \operatorname{sgn}(a,t)e^{-i2\pi f t}dt.
\label{eq:Pf}
\end{equation}

Two classical facts about the square wave allow us to simplify the derivation. Since
$s(t)$ is of bounded variation, the partial sums $s_N$ converge to $s$ at
each point of continuity (and to the midpoint value $0$ at the jumps). Further, the partial sums $s_N(t)$ are
uniformly bounded, so $|s_N(t)|\le C$ for all $N$ and $t$ \cite{Zygmund}. Hence
\begin{equation}
\bigl| s_N(t-d)\,\operatorname{sgn}(a,t)\,e^{-i2\pi f t}\bigr|
\le C\,e^{-a|t|}.
\label{eq:dombound}
\end{equation}
$C e^{-a|t|}$ is integrable on $\mathbb{R}$ and independent of $N$, and
$s_N(t-d)\to s(t-d)$ for almost every $t$. Therefore, by dominated convergence
we can interchange the order of the limit and the integral in \eqref{eq:Pf}.
In addition, since each sum is finite for all $N$ and $t$, we can interchange the order of 
summation and integration. Equation \eqref{eq:Pf} then becomes

\begin{equation}
\begin{split}
\lim_{N\to\infty}\sum_{|n|\le N} e^{-i2\pi n d}\, s_n
   \int_{-\infty}^{\infty}\operatorname{sgn}(a,t)\,e^{-i2\pi (f-n) t}\,dt
&=\sum_{n=-\infty}^{\infty} e^{-i2\pi n d}\, s_n\,\operatorname{SGN}(a,f-n).
\end{split}
\label{eq:sseries2}
\end{equation}

By \eqref{eq:sqcoef} and \eqref{eq:sgn_f} the summands $s_n\,\operatorname{SGN}(a,f-n)$
decay like $1/n^{2}$. This is the value of the signum window: the series
now converges absolutely. Substituting \eqref{eq:sgn_f} evaluated at $f-n$
and the coefficients for the square wave from
\eqref{eq:sqcoef}, we arrive at the Fourier transform of the windowed square wave.

\begin{equation}
P(a,d,f)=\sum_{n=\pm1, \pm3, \dots}e^{-i2\pi n d} \dfrac{8(n-f)}{n(a^2+4\pi^2(n-f)^2)}
\label{eq:sseries3} 
\end{equation}

At $f=0$, \eqref{eq:sseries3} reduces to:

\begin{equation}
\begin{split}
P(a, d, 0) &=  \sum_{n =\pm 1, \pm 3, \dots } \dfrac{8e^{-i2\pi n d}}{a^2+4\pi^2n^2}.
\end{split}
\label{eq:p_a0}
\end{equation}

This is the first major computation of our derivation: an absolutely
convergent series in the delay $d$ and the decay rate $a$.

\subsection*{The time domain.}

We now compute the same quantity in the time domain. That is, 
we find the integral of $p(a, d, t)$ over all $t$, which
is equivalent to the Fourier transform at $f=0$.

\begin{equation}
\begin{split}
\int_{-\infty}^{\infty}p(a, d, t)dt = \int_{-\infty}^{0} (-e^{at})s(t-d) dt + \int_{0}^{\infty}e^{-at}s(t-d)dt
\end{split}
\label{eq:p_even}
\end{equation}

The square wave is odd, so the negative half-line folds onto the
positive one. Using the substitution $u=-t$ for the integral over the $t<0$ region, we can simplify \eqref{eq:p_even} to:

\begin{equation}
\begin{split}
\int_{-\infty}^{0} (-e^{at})s(t-d) dt &= -\int_{0}^{\infty} e^{-au}s(-(u+d)) du \\
&= \int_{0}^{\infty} e^{-at}s(t+d) dt
\end{split}
\label{eq:p_even2}
\end{equation}

In \eqref{eq:p_even2}, we have used the odd property of $s(t)$, where $s(-t)=-s(t)$.
 Breaking the infinite integral into a sum due to the periodicity, we have

\begin{equation}
\begin{split}
\int_{-\infty}^{\infty}p(a, d, t)dt &= \int_{0}^{\infty}e^{-at}\left( s(t+d) + s(t-d)  \right) dt \\
       &= \int_{0}^{1} \sum_{n = 0}^{\infty} e^{-a(t+n)}  \left( s(t+d) + s(t-d) \right) dt \\
\end{split}
\label{eq:p_t2}
\end{equation}

Swapping the order of summation and integration in \eqref{eq:p_t2} is
justified by Fubini's theorem: 
since $|s(t+d)+s(t-d)|\le 2$, the integrand is dominated by the nonnegative
function $2e^{-a(t+n)}$, and consequently for $a>0$,

\begin{equation}
\sum_{n=0}^{\infty}\int_0^1 2e^{-a(t+n)}\,dt
=\frac{2(1-e^{-a})}{a}\sum_{n=0}^{\infty}e^{-an}
=\frac{2}{a}<\infty.
\label{eq:p_t22}
\end{equation}

Therefore, the sum and integral converge absolutely. The
geometric series in \eqref{eq:p_t22} has a closed form sum, and

\begin{equation}
 \sum_{n=0}^{\infty} e^{-an} \int_{0}^{1} e^{-at}
\left(s(t+d)+s(t-d)\right)\,dt
=\frac{1}{1-e^{-a}}
\left(
    \int_{0}^{1}e^{-at}(s(t+d)+s(t-d))\,dt
\right).
\label{eq:p_t}
\end{equation}

We now need to do some elementary algebra and calculus, over a single period of time.
For $0 \leq d\leq0.5$, we can break apart the integrals of \eqref{eq:p_t} as follows.

\begin{equation}
\begin{split}
\int_{0}^{1}e^{-at}s(t+d)dt &=  \int_{0}^{0.5-d} e^{-at} dt - \int_{0.5-d}^{1-d} e^{-at}dt + \int_{1-d}^1 e^{-at} dt \\
\int_{0}^{1}e^{-at}s(t-d)dt &=  \int_{0}^{d} -e^{-at} dt + \int_{d}^{d+0.5} e^{-at}dt - \int_{d+0.5}^1 e^{-at} dt \\
\end{split}
\label{eq:p_t3}
\end{equation}

The equations in \eqref{eq:p_t3} can be combined and simplified:
\begin{equation}
\begin{split}
\int_{0}^{1}e^{-at}s(t+d)dt + \int_{0}^{1}e^{-at}s(t-d)dt 
&= \dfrac{1}{a} \left( -2e^{-a(0.5-d)} + 2e^{-a(1-d)} + 2e^{-ad} - 2e^{-a(d+0.5)} \right) \\
&= \dfrac{2}{a} \left( e^{-ad} (1 - e^{-a/2} ) - e^{ad}e^{-a/2} ( 1 - e^{-a/2} ) \right)\\
\end{split}
\label{eq:p_t4}
\end{equation}

Further factoring and simplifying with hyperbolic sine functions, we reduce \eqref{eq:p_t4} further.

\begin{equation}
\begin{split}
\dfrac{2}{a} \left( e^{-ad} (1 - e^{-a/2} ) - e^{ad}e^{-a/2} ( 1 - e^{-a/2} ) \right) &= \dfrac{2e^{-a/2}}{a}(e^{a/4}-e^{-a/4})(e^{-ad+a/4}-e^{ad-a/4}) \\
&= \dfrac{8e^{-a/2}}{a}\sinh(\dfrac{a}{4})\sinh(\dfrac{a}{4}-ad)
\end{split}
\label{eq:p_t5}
\end{equation}

Since $1-e^{-a}=2e^{-a/2}\sinh(a/2)=4e^{-a/2}\sinh(a/4)\cosh(a/4)$, we can combine \eqref{eq:p_t5} with \eqref{eq:p_t} to obtain

\begin{equation}
\begin{split}
\int_{-\infty}^{\infty}p(a, d, t)dt &= \dfrac{2}{a}\dfrac{\sinh(\dfrac{a(1-4d)}{4})}{ \cosh(\dfrac{a}{4})}
\end{split}
\label{eq:p_t6}
\end{equation}

This is the second major computation of the total integral, which is a closed form without a series.

\subsection*{Equivalence.}

The two calculations must be equivalent. Setting \eqref{eq:p_t6} equal to
\eqref{eq:p_a0}, we get

\begin{equation}
\begin{split}
\sum_{n =\pm 1, \pm 3, \dots } \dfrac{4ae^{-i2\pi n d}}{a^2+4\pi^2n^2} &= \dfrac{\sinh(\dfrac{a(1-4d)}{4})}{ \cosh(\dfrac{a}{4})}.
\end{split}
\label{eq:result1}
\end{equation}

This calculation now leads to the principal result of this article.

\begin{theorem}\label{thm:main}
Let $0 \le d\le \tfrac12$ and $z \in \mathbb{R}$. Then

\begin{equation}
\frac{8z}{\pi^2}
\sum_{n=1,3,\ldots}
\frac{\cos(2\pi nd)}
{n^2+\dfrac{4z^2}{\pi^2}}
=
\frac{\sinh(z(1-4d))}
{\cosh z}.
\label{eq:mainidentity}
\end{equation}

\end{theorem}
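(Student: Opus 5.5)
The plan is to read Theorem~\ref{thm:main} off the identity \eqref{eq:result1} by a change of variables, and then handle the degenerate cases separately.

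\textbf{Step 1: real form of the sum.} The summands of \eqref{eq:result1} at $n$ and $-n$ share the denominator $a^2+4\pi^2n^2$. Pairing them turns $e^{-i2\pi nd}+e^{i2\pi nd}$ into $2\cos(2\pi nd)$, so the left side of \eqref{eq:result1} becomes
\[
\sum_{n=1,3,\ldots}\frac{8a\cos(2\pi nd)}{a^2+4\pi^2n^2}.
\]
This rearrangement is legitimate because the series in \eqref{eq:p_a0} converges absolutely.

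\textbf{Step 2: substitution $a=4z$ for $z>0$.} The right side of \eqref{eq:result1} becomes $\sinh(z(1-4d))/\cosh z$. On the left, divide numerator and denominator by $4\pi^2$:
\[
\frac{32z}{16z^2+4\pi^2n^2}=\frac{8z/\pi^2}{n^2+4z^2/\pi^2}.
\]
This gives exactly \eqref{eq:mainidentity}, with $8z/\pi^2$ pulled out of the sum. The range $0\le d\le\tfrac12$ matches the range used in the time-domain computation \eqref{eq:p_t3}, which was derived for $a>0$. So every $z>0$ is covered.

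\textbf{Step 3: the remaining values $z=0$ and $z<0$.} At $z=0$ both sides vanish: the left side because the series converges like $\sum 1/n^2$ and is multiplied by $0$, the right side because $\sinh 0=0$. For $z<0$, both sides are odd functions of $z$. On the left, the prefactor $8z/\pi^2$ is odd and the sum depends only on $z^2$. On the right, $\sinh$ is odd and $\cosh$ is even. So the identity for $-z>0$ gives it for $z$.

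\textbf{Main point of care.} The only nontrivial input is the validity of \eqref{eq:result1} itself, which rests on the interchanges justified earlier: dominated convergence for $s_N$ and Fubini in the time domain. Given those, the theorem needs only the pairing rearrangement and the parity argument, both of which depend on absolute convergence.
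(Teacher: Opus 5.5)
Your proposal is correct and is essentially the paper's proof. You pair the $\pm n$ terms of \eqref{eq:result1} into cosines, which is justified by absolute convergence, and substitute $a=4z$. You then cover $z\le 0$ by noting that both sides vanish at $z=0$ and are odd in $z$. The paper does the same thing, except that it applies the oddness extension in $a$ before substituting, which makes no difference.
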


\begin{proof}
Equation \eqref{eq:result1} is derived for $a>0$ and $0 \le d \le 1/2$.  It is trivially true for $a=0$, and since it is an odd function of $a$ on both sides, it is true for negative values of $a$ as well, so no restriction on $a$ is required. We next pair the terms $n$ and $-n$ into cosines and substitute $z=a/4$, which
yields \eqref{eq:mainidentity}. 
\end{proof}

Theorem~\ref{thm:main} is a two-parameter identity in $z$ and $d$ which lead down 
distinct mathematical avenues. Freezing the delay at $d=0$ recovers a
classical expansion.

\begin{corollary}\label{cor:tanh}
For $z>0$,
\[
\tanh z =
\frac{8z}{\pi^2}
\sum_{n\ \mathrm{odd}}
\frac{1}{n^2 + 4z^2/\pi^2}.
\]
\end{corollary}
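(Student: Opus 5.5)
The plan is to specialize Theorem~\ref{thm:main} to the endpoint $d=0$ and read off the result. This value is inside the admissible range $0\le d\le \tfrac12$, and every $z\in\mathbb{R}$ is allowed, so in particular every $z>0$. At $d=0$ each cosine factor is $\cos(0)=1$. The left side of \eqref{eq:mainidentity} then becomes $\frac{8z}{\pi^2}\sum_{n=1,3,\ldots}\frac{1}{n^2+4z^2/\pi^2}$, and the right side becomes $\sinh(z)/\cosh(z)=\tanh z$.

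The only point needing care is the meaning of ``$n$ odd'' in the corollary. In Theorem~\ref{thm:main} the sum runs over positive odd integers only, the terms $\pm n$ having already been paired into cosines. I would therefore read the sum in Corollary~\ref{cor:tanh} as running over the positive odd integers $n=1,3,5,\dots$, matching the theorem.

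If instead the sum were meant over all odd $n\in\mathbb{Z}$, the summand is even in $n$, so the sum would double. The correct coefficient would then be $4z/\pi^2$, which agrees with \eqref{eq:result1} at $d=0$ with $a=4z$. I would add a one-line remark fixing the convention so the constant $8z/\pi^2$ is unambiguous.

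Convergence needs no new argument, since the series is dominated by $\sum_{n\ \mathrm{odd}} 1/n^2$. The restriction $z>0$ is not actually needed, because both sides are odd in $z$ and vanish at $z=0$, but it is harmless. There is no real obstacle here. The only step requiring attention is this bookkeeping of the index set against the factor $8$ versus $4$.
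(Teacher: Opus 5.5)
Your proposal is correct and is essentially the paper's proof: set $d=0$ in Theorem~\ref{thm:main}, so each cosine becomes $1$ and the right side becomes $\sinh z/\cosh z=\tanh z$. Your remark on the index set is also right and worth keeping: the constant $8z/\pi^2$ requires the sum over positive odd $n$, matching the theorem, while a sum over all odd $n\in\mathbb{Z}$ would need $4z/\pi^2$, which the paper leaves implicit.
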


\begin{proof}
With $d=0$, we get the Mittag-Leffler partial-fraction expansion for the hyperbolic tangent.
\end{proof}

Instead of setting $d=0$, we will now send the decay rate to zero, which removes the taper and produces the spectrum of a triangle wave.

\begin{corollary}\label{cor:cosseries}
For $0 \le d \le 1/2$,
\[
\sum_{n = 1,  3, \dots } \dfrac{\cos(2\pi n d)}{n^2} =\dfrac{\pi^2}{8}( 1-4d).
\]
\end{corollary}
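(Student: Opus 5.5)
The plan is to start from Theorem~\ref{thm:main}, divide both sides by $8z/\pi^2$ for $z>0$, and let $z\to0^+$. For fixed $0\le d\le\tfrac12$ and $z>0$, the identity \eqref{eq:mainidentity} reads
\[
\sum_{n=1,3,\ldots}\frac{\cos(2\pi nd)}{n^2+4z^2/\pi^2}
=\frac{\pi^2}{8}\cdot\frac{\sinh(z(1-4d))}{z\cosh z}.
\]
The corollary then follows by computing the limit of each side separately as $z\to0^+$.

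On the right-hand side I would use the Maclaurin expansion $\sinh(w)=w+O(w^3)$ with $w=z(1-4d)$. This gives $\sinh(z(1-4d))/z\to 1-4d$, and since $\cosh z\to1$ the right side tends to $\frac{\pi^2}{8}(1-4d)$. The case $d=\tfrac14$ is harmless: there the right side is identically $0$.

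On the left-hand side I need to interchange the limit with the infinite sum. Each summand satisfies
\[
\left|\frac{\cos(2\pi nd)}{n^2+4z^2/\pi^2}\right|\le\frac{1}{n^2},
\]
uniformly in $z$. Since $\sum_{n\ \mathrm{odd}}1/n^2$ converges, the Weierstrass M-test gives uniform convergence of the series in $z$. Dominated convergence for series would serve equally well. Either way, the limit passes inside the sum, and each term tends to $\cos(2\pi nd)/n^2$. Equating the two limits yields the stated identity.

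This interchange of limit and sum is the only step needing care. It is easy here precisely because of the $1/n^2$ domination that the signum window was introduced to provide. One small point: the $z$-independent bound requires no restriction on $d$, so the endpoints $d=0$ and $d=\tfrac12$ are covered. As consistency checks, $d=0$ gives $\sum_{n\ \mathrm{odd}}1/n^2=\pi^2/8$, which leads to the Basel sum, and $d=\tfrac12$ gives $-\pi^2/8$, as expected since $\cos(\pi n)=-1$ for odd $n$.
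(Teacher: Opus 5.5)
Your proposal is correct and is essentially the paper's proof: divide Theorem~\ref{thm:main} by $z$ and let $z\to0^+$. The limit passes inside the sum using the $z$-independent bound $1/n^2$, via dominated convergence, or equivalently the M-test you mention. The right side is handled by Maclaurin expansions of $\sinh$ and $\cosh$.
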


\begin{proof}
Starting from Theorem~\ref{thm:main}, we divide by $z$ and take the limit as $z$ goes to zero from above. We interchange the order of the limit and the summation via the dominated convergence theorem, since $1/(n^2+4z^2/\pi^2)\le 1/n^2$ for all $z$ and all $n\ge1$.

\begin{equation}
\begin{split}
\dfrac{8}{\pi^2}\sum_{n = 1,  3, \dots }\lim_{z \to 0^+}  \dfrac{\cos(2\pi n d)}{n^2+\dfrac{4z^2}{\pi^2}} &= \lim_{z \to 0^+}\dfrac{\sinh(z(1-4d))}{z \cosh(z)} \\
\dfrac{8}{\pi^2}\sum_{n = 1,  3, \dots } \dfrac{\cos(2\pi n d)}{n^2} &= \lim_{z \to 0^+} \dfrac{z(1-4d) + z^3(1-4d)^3/6 + \dots}{z(1+z^2/2+\dots)}\\
\sum_{n = 1,  3, \dots } \dfrac{\cos(2\pi n d)}{n^2} &=\dfrac{\pi^2}{8}( 1-4d)
\end{split}
\label{eq:equal}
\end{equation}
\end{proof}

Rearranging~\eqref{eq:equal} gives the Fourier expansion of the first
Euler polynomial, valid for $0 \le d \le \tfrac{1}{2}$:
\begin{equation}
E_1(2d) = 2d - \dfrac{1}{2}=-\frac{4}{\pi^2} \sum_{n = 1, 3, \ldots} \frac{\cos(2\pi n d)}{n^2},
\qquad 0 \le d \le \tfrac{1}{2},
\label{eq:euler1}
\end{equation}
which is \cite[\href{https://dlmf.nist.gov/24.8.E5}{(24.8.5)}]{DLMF} at $n=1$.

From \eqref{eq:equal}, setting $d=0$, we obtain \eqref{eq:lhopit4}.

\begin{equation}
\sum_{n =1,3,5, \dots } \dfrac{1}{n^2}= \dfrac{\pi^2}{8} 
\label{eq:lhopit4}
\end{equation}

Since the sum over the even integers is $1/4$th the sum over all integers, \eqref{eq:lhopit4} leads directly to $\zeta(2)$, $\sum_{n=1}^{\infty} 1/n^2 = \pi^2/6$.

\section{Climbing the Integral Ladder}

The windowed square wave, evaluated in the frequency domain at $f=0$, results in a sum with terms that die off as $1/n^2$. Naturally,
we could next consider windowing the periodic triangle function --- which is the integral of $s(t)$. The windowed triangle function has Fourier series coefficients that fall off 
as $1/n^3$, and following the same process as in Section~2, we would arrive at another 
sum over the odd integers.  However, with no delay added the total integral would now be zero. For this reason, we introduced a delay $d$, which renders the sum nonzero. This observation was the motivation behind introducing the delay in Section~2.

\begin{corollary}\label{cor:sinseries}
For $0 \le d \le 1/2$ and $z>0$,
\[
 \sum_{n =1, 3, \dots } \dfrac{ \sin(2\pi n d)}{n\left(n^2+\dfrac{4z^2}{\pi^2}\right)}
 = \dfrac{\pi^3}{8z^2}\dfrac{\sinh(z(1-2d))\sinh(2zd)}{ \cosh(z)}.
\]
\end{corollary}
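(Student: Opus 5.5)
The plan is to integrate the identity of Theorem~\ref{thm:main} termwise in the delay variable $d$, from $0$ to $d$. Fix $z>0$. For $0\le d\le \tfrac12$, the left side of \eqref{eq:mainidentity} is a series of continuous functions of $d$. It is dominated by $\sum_{n\ \mathrm{odd}} 1/(n^2+4z^2/\pi^2)<\infty$, so it converges uniformly on $[0,\tfrac12]$, and we may integrate term by term. Each term contributes
\[
\int_0^d \cos(2\pi n u)\,du=\frac{\sin(2\pi n d)}{2\pi n}.
\]
The left side therefore becomes
\[
\frac{8z}{\pi^2}\cdot\frac{1}{2\pi}\sum_{n=1,3,\dots}\frac{\sin(2\pi n d)}{n\left(n^2+4z^2/\pi^2\right)}
=\frac{4z}{\pi^3}\sum_{n=1,3,\dots}\frac{\sin(2\pi n d)}{n\left(n^2+4z^2/\pi^2\right)}.
\]

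On the right side we integrate $\sinh(z(1-4u))/\cosh z$ over $[0,d]$. This gives
\[
\frac{1}{\cosh z}\cdot\frac{\cosh z-\cosh(z(1-4d))}{4z}.
\]
The remaining step is to put the numerator into product form with the identity $\cosh A-\cosh B=2\sinh\frac{A+B}{2}\sinh\frac{A-B}{2}$. Taking $A=z$ and $B=z(1-4d)$ gives $\frac{A+B}{2}=z(1-2d)$ and $\frac{A-B}{2}=2zd$. Hence the right side equals
\[
\frac{2\sinh(z(1-2d))\sinh(2zd)}{4z\cosh z}.
\]

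Finally we equate the two sides and multiply by $\pi^3/(4z)$. This gives
\[
\sum_{n=1,3,\dots}\frac{\sin(2\pi n d)}{n\left(n^2+4z^2/\pi^2\right)}=\frac{\pi^3}{8z^2}\,\frac{\sinh(z(1-2d))\sinh(2zd)}{\cosh z},
\]
which is the claim.

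No step here is a real obstacle. The only point needing care is justifying the termwise integration, and the Weierstrass M-test bound above handles it. Beyond that, one must keep track of the constants $1/(2\pi)$ and $1/(4z)$ and apply the hyperbolic product identity correctly. As a consistency check, both sides vanish at $d=0$ and at $d=\tfrac12$.
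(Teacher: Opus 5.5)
Your proposal is correct and follows the paper's proof: integrate Theorem~\ref{thm:main} termwise in the delay from $0$ to $d$, then rewrite $\cosh z-\cosh(z(1-4d))$ in product form with $\cosh A-\cosh B=2\sinh\frac{A+B}{2}\sinh\frac{A-B}{2}$. The only difference is that you justify the interchange by uniform convergence via the Weierstrass M-test, where the paper cites Fubini with the bound $1/n^2$; your constants ($\frac{4z}{\pi^3}$ on the left, $\frac{1}{4z}$ on the right) check out.
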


\begin{proof}

Instead of redoing the calculations in Section~2 for the triangle wave, we can more easily integrate our result from Theorem~\ref{thm:main} with respect to $d$ and obtain the same result. Starting from \eqref{eq:mainidentity} and integrating from $0$ to $d$, we have

\begin{equation}
   \int_{0}^{d} \left( \dfrac{8z}{\pi^2}\sum_{n = 1, 3, \dots } \dfrac{\cos(2\pi n x)}{n^2+\dfrac{4z^2}{\pi^2}} \right) dx= \int_{0}^{d} \dfrac{\sinh(z(1-4x))}{ \cosh(z)} dx.
\label{eq:integral100}
\end{equation}

We interchange the order of summation and integration (again via Fubini's theorem since the summand is bounded by $1/n^2$ for all $x$), and the calculations proceed as follows. 
\begin{equation}
\left. \dfrac{4z}{\pi^3} \sum_{n =1, 3, \dots } \dfrac{ \sin(2\pi n x)}{n\left(n^2+\dfrac{4z^2}{\pi^2}\right)} \right|_{0}^{d} = \left. \dfrac{-1}{4z \cosh(z)}\cosh(z(1-4x)) \right|_{0}^{d}
\label{eq:integral200}
\end{equation}

\begin{equation}
 \sum_{n =1, 3, \dots } \dfrac{ \sin(2\pi n d)}{n\left(n^2+\dfrac{4z^2}{\pi^2}\right)}
 = \dfrac{\pi^3}{16z^2}\dfrac{\cosh(z)-\cosh(z(1-4d))}{ \cosh(z)}
\label{eq:integral300}
\end{equation}

Using the identity $\cosh(a)-\cosh(b)=2\sinh( (a+b)/2 )\sinh( (a-b)/2)$, we arrive at 
Corollary~\ref{cor:sinseries}. Setting $d=0$ yields the trivial identity.

\end{proof}

\begin{corollary}\label{cor:sind}
For $0 \le d \le 1/2$,
\[
  \sum_{n =1, 3, \dots } \dfrac{ \sin(2\pi n d)}{n^3}
 = \dfrac{\pi^3(d-2d^2)}{4}.
\]
\end{corollary}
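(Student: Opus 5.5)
There are two natural routes to Corollary~\ref{cor:sind}. The first sends $z\to0^+$ in Corollary~\ref{cor:sinseries}. The second integrates Corollary~\ref{cor:cosseries} directly in $d$. I would take the limit route, because it parallels the passage from Theorem~\ref{thm:main} to Corollary~\ref{cor:cosseries}. The direct integration serves as a cross-check.

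First, the left side of Corollary~\ref{cor:sinseries}. For $z>0$ and every odd $n\ge1$, the summand is bounded by
\[
\frac{|\sin(2\pi n d)|}{n\left(n^2+4z^2/\pi^2\right)}\le \frac{1}{n^3},
\]
which is summable and independent of $z$. Dominated convergence therefore lets us pass $\lim_{z\to0^+}$ inside the sum, and each term tends to $\sin(2\pi n d)/n^3$.

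Second, the right side, $\frac{\pi^3}{8z^2}\,\sinh(z(1-2d))\sinh(2zd)/\cosh z$. By the Maclaurin expansions used in Corollary~\ref{cor:cosseries}, we have
\[
\sinh(z(1-2d))=z(1-2d)+O(z^3),\qquad \sinh(2zd)=2zd+O(z^3),\qquad \cosh z=1+O(z^2).
\]
The product of the two sines is therefore $2d(1-2d)z^2+O(z^4)$, and the right side tends to
\[
\frac{\pi^3}{8}\cdot 2d(1-2d)=\frac{\pi^3(d-2d^2)}{4}.
\]
Equating the two limits gives the claim for all $0\le d\le\tfrac12$. At $d=0$ both sides vanish trivially.

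As the cross-check, integrate Corollary~\ref{cor:cosseries} from $0$ to $d$. The interchange is justified by Fubini, since the summand is bounded by $1/n^2$. This gives
\[
\frac{1}{2\pi}\sum_{n\ \mathrm{odd}}\frac{\sin(2\pi n d)}{n^3}=\frac{\pi^2}{8}\,(d-2d^2),
\]
which is the same formula.

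The only point needing care is the interchange of limit and sum. The $1/n^3$ domination handles it at once, so I expect no real obstacle here. The hardest step is merely keeping track of the $z^2$ cancellation in the expansion of the right side.
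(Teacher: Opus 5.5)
Your proposal is correct and follows the paper's own proof. You let $z\to0^+$ in Corollary~\ref{cor:sinseries}, justify moving the limit inside the sum by the $1/n^3$ domination, and read off the limit of the right side from the Maclaurin expansions; your cross-check, integrating Corollary~\ref{cor:cosseries} from $0$ to $d$, is also valid and gives the same formula.
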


\begin{proof}

Taking the limit as $z$ goes to 0 in Corollary~\ref{cor:sinseries}
and employing Maclaurin expansions for the sinh functions, we get

\begin{equation}
\lim_{z \to 0^+} \sum_{n =1, 3, \dots } \dfrac{ \sin(2\pi n d)}{n\left(n^2+\dfrac{4z^2}{\pi^2}\right)}
 = \lim_{z \to 0^+}\dfrac{\pi^3}{8z^2}\dfrac{ [z(1-2d) + O(z^3)][2zd + O(z^3)]}{ 1 + O(z^2)}.
\label{eq:integral500}
\end{equation}

The limit on the left is easily evaluated since the limit and sum can be swapped (again via dominated convergence theorem since the magnitude of the summand is $\le1/n^3$ for all $n\ge 1$). The result is Corollary~\ref{cor:sind}.
\end{proof}

\begin{corollary}\label{cor:sums}
\begin{gather*}
\sum_{n =0 }^{\infty}\dfrac{ (-1)^n}{ (2n+1)^3} =  \dfrac{\pi^3}{32} \\
\frac{1}{1^3} + \frac{0}{3^3} - \frac{1}{5^3} + \frac{1}{7^3} + \frac{0}{9^3} - \frac{1}{11^3} + \dots{}=  \dfrac{\pi^3}{18\sqrt{3}} \\
1+\frac{1}{3^3}-\frac{1}{5^3}-\frac{1}{7^3}
+\frac{1}{9^3}+\frac{1}{11^3}-\cdots
 =  \dfrac{3\pi^3}{64\sqrt{2}}
\end{gather*}
\end{corollary}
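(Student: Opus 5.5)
Each of the three sums comes from Corollary~\ref{cor:sind} at a particular delay: $d=\tfrac14$, $d=\tfrac16$ and $d=\tfrac18$. All three lie in $[0,\tfrac12]$, so the corollary applies. In each case the plan is to compute the right-hand side $\pi^3(d-2d^2)/4$, list the values of $\sin(2\pi n d)$ on the odd integers, and check that they give the displayed sign pattern up to an overall constant.

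\emph{First sum, $d=\tfrac14$.} For odd $n$ we have $\sin(\pi n/2)=(-1)^{(n-1)/2}$. The left side of Corollary~\ref{cor:sind} is therefore exactly $\sum_{n\ge0}(-1)^n/(2n+1)^3$. The right side is $\pi^3(\tfrac14-\tfrac18)/4=\pi^3/32$, which gives $\beta(3)=\pi^3/32$.

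\emph{Second sum, $d=\tfrac16$.} Here $\sin(\pi n/3)$ over $n=1,3,5,7,9,11,\dots$ runs through $\tfrac{\sqrt3}{2}(1,0,-1,1,0,-1,\dots)$ with period $6$. Checking: $\sin(\pi/3)=\tfrac{\sqrt3}{2}$, $\sin\pi=0$, $\sin(5\pi/3)=-\tfrac{\sqrt3}{2}$, and the pattern repeats from $n=7$. So the left side equals $\tfrac{\sqrt3}{2}$ times the displayed series. The right side is $\pi^3(\tfrac16-\tfrac1{18})/4=\pi^3/36$. Dividing by $\tfrac{\sqrt3}{2}$ gives $\pi^3/(18\sqrt3)$.

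\emph{Third sum, $d=\tfrac18$.} Here $\sin(\pi n/4)$ for $n=1,3,5,7$ equals $\tfrac{\sqrt2}{2}(1,1,-1,-1)$, and the pattern has period $8$ in $n$. So the left side is $\tfrac{\sqrt2}{2}$ times the displayed series. The right side is $\pi^3(\tfrac18-\tfrac1{32})/4=3\pi^3/128$. Dividing by $\tfrac{\sqrt2}{2}$ gives $3\pi^3/(64\sqrt2)$.

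Convergence is not an issue: every series is dominated by $\sum 1/n^3$, so it converges absolutely. The only real work, and the step most likely to hide an error, is checking that the periodic sign patterns of the sines match the displayed patterns exactly, including the zeros at multiples of $3$ when $d=\tfrac16$. I would check these directly by reducing $n$ modulo $6$ or $8$, and then do the arithmetic $d-2d^2$ for each $d$.
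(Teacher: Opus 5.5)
Your proposal is correct and follows the paper's proof: both specialize Corollary~\ref{cor:sind} at $d=\tfrac14,\tfrac16,\tfrac18$. Your explicit sign patterns and arithmetic (right-hand sides $\pi^3/32$, $\pi^3/36$, $3\pi^3/128$, divided by $1$, $\tfrac{\sqrt3}{2}$, $\tfrac{\sqrt2}{2}$ respectively) check out and fill in details the paper leaves implicit.
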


\begin{proof}

Evaluating Corollary~\ref{cor:sind} at $d=1/4$ gives the first sum, equivalent to the Dirichlet beta function evaluated at 3, $\beta(3)$. The second sum is found from applying $d=1/6$ and is equivalent to the Dirichlet L-function at 3 for the odd character mod 6. The last sum is found from applying $d=1/8$, and the result is equivalent to the Dirichlet L-function at 3 for the odd primitive character mod 8.

\end{proof}

\begin{corollary}\label{cor:cosd}
For $0\le d \le 1/2$,
\begin{gather*}
\sum_{n =1, 3, \dots } \dfrac{ \cos(2\pi n d)}{n^4}  = \pi^4(\dfrac{1}{96}-\dfrac{d^2}{4}+\dfrac{d^3}{3}) 
\end{gather*}
\end{corollary}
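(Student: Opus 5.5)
The plan is to climb one more rung of the integral ladder by integrating Corollary~\ref{cor:sind} with respect to $d$, exactly as Corollary~\ref{cor:sinseries} was obtained from Theorem~\ref{thm:main}. First, replace $d$ by $x$ in Corollary~\ref{cor:sind} and integrate both sides over $0\le x\le d$, with $0\le d\le\tfrac12$. Since the summand $\sin(2\pi n x)/n^3$ is bounded by $1/n^3$ uniformly in $x$, Fubini's theorem (or uniform convergence) lets us integrate termwise. This gives
\[
\sum_{n=1,3,\dots}\frac{1-\cos(2\pi n d)}{2\pi n^4}
=\frac{\pi^3}{4}\Bigl(\frac{d^2}{2}-\frac{2d^3}{3}\Bigr).
\]

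Multiplying through by $2\pi$ and rearranging yields
\[
\sum_{n=1,3,\dots}\frac{\cos(2\pi n d)}{n^4}
= S_4 - \pi^4\Bigl(\frac{d^2}{4}-\frac{d^3}{3}\Bigr),
\qquad S_4:=\sum_{n=1,3,\dots}\frac{1}{n^4}.
\]
So the whole statement reduces to the single constant $S_4=\pi^4/96$. This is the main obstacle: setting $d=0$ is useless, since it gives $S_4=S_4$.

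To determine $S_4$, I would evaluate the display at a second point where the left side is a known multiple of $S_4$. The natural choice is $d=\tfrac12$. There $\cos(\pi n)=-1$ for odd $n$, so the left side equals $-S_4$. Hence
\[
-S_4=S_4-\pi^4\Bigl(\frac{1}{16}-\frac{1}{24}\Bigr)=S_4-\frac{\pi^4}{48},
\]
which gives $S_4=\pi^4/96$. Substituting back gives the claimed formula for $0\le d\le\tfrac12$.

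As a by-product, multiplying by $16/15$ to restore the even integers gives $\zeta(4)=\pi^4/90$. As a consistency check, $d=\tfrac14$ gives $\sum_{n \text{ odd}}\cos(\pi n/2)/n^4=0$, and indeed $\tfrac1{96}-\tfrac1{64}+\tfrac1{192}=0$.
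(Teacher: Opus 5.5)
Your proof is correct. It uses the same termwise integration of Corollary~\ref{cor:sind} as the paper; the only difference is how the constant of integration is fixed.

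The paper integrates over $[d,\tfrac14]$ rather than $[0,d]$. The boundary term $\cos(2\pi n\cdot\tfrac14)=\cos(\pi n/2)$ then vanishes for every odd $n$, and the constant appears directly as $\tfrac{\pi^4}{2}\bigl(\tfrac1{32}-\tfrac1{96}\bigr)=\tfrac{\pi^4}{96}$, with no unknown to solve for. This is the normalization behind the recursion $Q_{2m+1}(d)=\int_d^{1/4}2Q_{2m}(x)\,dx$ in Theorem~\ref{thm:zeta2k}, and your consistency check at $d=\tfrac14$ is exactly that normalization.

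You integrate from $0$ instead, which leaves $S_4$ undetermined. You then pin it down with one extra linear equation at the half-period point $d=\tfrac12$, where $\cos(\pi n)=-1$. This costs one step and relies on the sign flip at $\tfrac12$ rather than the quarter-period zero. It iterates equally well: at every rung it gives $S_{2k+2}=\pi^{2k+2}\int_0^{1/2}Q_{2k}(x)\,dx$. This agrees with the paper's $2\pi^{2k+2}\int_0^{1/4}Q_{2k}(x)\,dx$, because the odd-$n$ sine series is symmetric under $d\mapsto\tfrac12-d$.
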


\begin{proof}

Beginning with Corollary~\ref{cor:sind}, we can again integrate from $d$ to $1/4$:

\begin{equation}
\int_{d}^{1/4}\sum_{n =1, 3, \dots } \dfrac{ \sin(2\pi n x)}{ n^3} dx= \int_{d}^{1/4}\dfrac{ \pi^3 (x-2x^2) }{4} dx.
\label{eq:integral3}
\end{equation}

As before the sum and integral can be interchanged via dominated convergence, and we arrive at

\begin{equation}
\begin{split}
\left. \sum_{n =1, 3, \dots } \dfrac{ \cos(2\pi n x)}{-2\pi n^4} \right|_{d}^{1/4} = \left. \dfrac{\pi^3(\dfrac{x^2}{2}-\dfrac{2x^3}{3})}{4} \right|_{d}^{1/4}.
\end{split}
\label{eq:integral4}
\end{equation}

Equation \eqref{eq:integral4} simplifies to our desired result.
\end{proof}

We can let $d=0$ in Corollary~\ref{cor:cosd}, and get $\sum_{n =1, 3, \dots } 1/n^4  = \pi^4/96$. Since the sum over the even integers of $1/n^4$ is $1/16$th the sum over all integers, we arrive at $\zeta(4)$:

\begin{equation}
\sum_{n =1 }^{\infty} \dfrac{ 1}{ n^4} = \dfrac{ \pi^4}{90}.
\label{eq:zeta4} 
\end{equation}

We've shown how integration of a cosine series produces a sine series with an additional 
$1/n$ factor, and integration again gets back to a cosine series, again divided by $n$. 
We can continue this process indefinitely, and can prove by induction this method 
would yield $\zeta(2k)$ for all positive $k$.

\begin{theorem}\label{thm:zeta2k}
For $k \ge 1$, 

\begin{equation}
\zeta(2k)=\frac{(2\pi)^{2k}Q_{2k-1}(0)}{2^{2k}-1},
\label{eq:zetazeta} 
\end{equation}

where $Q_m$ is a polynomial of degree $\le m$ with rational coefficients, defined recursively by

\begin{equation}
Q_1(d) = \dfrac{1-4d}{8} ,
\label{eq:qdef1} 
\end{equation}

\begin{equation}
Q_{2m}(d) = \int_{0}^{d}2Q_{2m-1}(x)dx,
\label{eq:qdef2k} 
\end{equation}

\begin{equation}
Q_{2m+1}(d)=\int_{d}^{1/4}2Q_{2m}(x)dx.
\label{eq:qdef2kp1} 
\end{equation}
\end{theorem}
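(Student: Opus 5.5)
We prove by induction on $m\ge 1$ the pair of identities, valid for $0\le d\le \tfrac12$,
\begin{equation*}
\sum_{n=1,3,\dots}\frac{\cos(2\pi n d)}{n^{2m}}=\pi^{2m}Q_{2m-1}(d),
\qquad
\sum_{n=1,3,\dots}\frac{\sin(2\pi n d)}{n^{2m+1}}=\pi^{2m+1}Q_{2m}(d).
\end{equation*}
The base case $m=1$ of the cosine identity is Corollary~\ref{cor:cosseries}, since $\pi^2 Q_1(d)=\pi^2(1-4d)/8$. As a sanity check, Corollaries~\ref{cor:sind} and~\ref{cor:cosd} are exactly the next two instances.

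Assume the cosine identity for some $m$. Integrate both sides from $0$ to $x=d$. The interchange of sum and integral is justified by dominated convergence (or Fubini), because $|\cos(2\pi n x)/n^{2m}|\le 1/n^{2}$ is summable. On the left, termwise integration gives $\sin(2\pi n d)/(2\pi n^{2m+1})$, with no boundary term at $0$. On the right we get $\pi^{2m}\int_0^d Q_{2m-1}$. Multiplying by $2\pi$ gives the sine identity with $\pi^{2m+1}Q_{2m}(d)$, since $Q_{2m}(d)=\int_0^d 2Q_{2m-1}$.

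Next integrate the sine identity from $d$ to $\tfrac14$, again interchanging by domination by $1/n^3$. The antiderivative of $\sin(2\pi n x)$ is $-\cos(2\pi n x)/(2\pi n)$, and $\cos(2\pi n\cdot\tfrac14)=\cos(\pi n/2)=0$ for odd $n$, so the upper limit contributes nothing. This leaves
\[
\frac{1}{2\pi}\sum_{n\ \mathrm{odd}}\frac{\cos(2\pi n d)}{n^{2m+2}}=\pi^{2m+1}\int_d^{1/4}Q_{2m}(x)\,dx .
\]
Multiplying by $2\pi$ gives $\pi^{2m+2}Q_{2m+1}(d)$, which closes the induction. The choice of $\tfrac14$ as the upper limit in \eqref{eq:qdef2kp1} is precisely what kills the unknown boundary constant; this is the only subtle point. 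With any other endpoint one would need an independent evaluation of the cosine sum somewhere, so I would emphasize the vanishing of $\cos(\pi n/2)$ for odd $n$ as the key step.

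The recursion adds one degree per integration, and integrating a polynomial with rational coefficients over rational limits keeps the coefficients rational. Since $Q_1$ has degree $1$, induction gives $\deg Q_m\le m$ with rational coefficients.

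Finally, set $d=0$ in the cosine identity with $m=k$: $\sum_{n\ \mathrm{odd}} n^{-2k}=\pi^{2k}Q_{2k-1}(0)$. The even terms contribute $2^{-2k}\zeta(2k)$, so $\sum_{n\ \mathrm{odd}}n^{-2k}=(1-2^{-2k})\zeta(2k)$. Hence
\[
\zeta(2k)=\frac{2^{2k}\pi^{2k}Q_{2k-1}(0)}{2^{2k}-1}=\frac{(2\pi)^{2k}Q_{2k-1}(0)}{2^{2k}-1},
\]
which is \eqref{eq:zetazeta}. Everything here is routine once the induction hypothesis is phrased as the paired cosine/sine identities on $[0,\tfrac12]$.
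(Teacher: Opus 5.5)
Your proof is correct and follows the paper's argument essentially step for step. Both start the induction from Corollary~\ref{cor:cosseries}, integrate the cosine identity termwise over $[0,d]$ and the sine identity over $[d,\tfrac14]$ (where $\cos(\pi n/2)=0$ for odd $n$ kills the boundary term), then set $d=0$ and remove the even-index contribution $2^{-2k}\zeta(2k)$. Your write-up is slightly more careful than the paper's, since you track the $2\pi$ factors explicitly and state the paired cosine/sine hypothesis on $[0,\tfrac12]$ up front.
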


\begin{proof}
Assume for some $k\ge1$ that:

\begin{equation}
\sum_{n =1, 3, \dots } \dfrac{ \cos(2\pi n d)}{n^{2k}}  = \pi^{2k}Q_{2k-1}(d).
\label{eq:induction1}
\end{equation}

Proceeding as in Corollary~\ref{cor:sinseries}, we can integrate both sides of \eqref{eq:induction1} from $0$ to $d$, replacing $d$ with the dummy variable $x$ in the integrand. The integral of the left hand side of \eqref{eq:induction1} follows as in \eqref{eq:integral100}, again with integration and summation swapping order via Fubini's theorem as the summand is bounded by $1/n^{2k}$.  The right hand side is the integral of a polynomial of degree $2k-1$ evaluated at $d$ and $0$, thus remaining a polynomial 
with rational coefficients but of degree at most $2k$. For convenience, we will absorb the $1/2$ term from the left hand side integral, so that we have the definition given in \eqref{eq:qdef2k}.

The $\pi$ factor moves to the right side, and the result becomes

\begin{equation}
\sum_{n =1, 3, \dots } \dfrac{ \sin(2\pi n d)}{n^{2k+1}}  = \pi^{2k+1}Q_{2k}(d).
\label{eq:induction2}
\end{equation}

Similarly, we repeat the process in Corollary~\ref{cor:cosd} and integrate \eqref{eq:induction2} from $d$ to $1/4$. Again, the left hand side proceeds as in 
\eqref{eq:integral3}, producing a cosine series. The upper integral limit of $1/4$ 
causes the cosine term to vanish, since $\cos(2\pi n/4)=0$ for all odd $n$. On the right side, this value also keeps the results constrained to rational values. The polynomial is given by
\eqref{eq:qdef2kp1}, and the integration of the sine series yields

\begin{equation}
\sum_{n =1, 3, \dots } \dfrac{ \cos(2\pi n d)}{n^{2k+2}}  = \pi^{2k+2}Q_{2k+1}(d).
\label{eq:induction3}
\end{equation}

Since we have $k=1$ established from Corollary~\ref{cor:cosseries} and the inductive step is true for arbitrary $k\ge 1$, the 
result in \eqref{eq:induction3} extends to all positive integers. Consequently, $d=0$ gives the sum over the odd integers, 

\begin{equation}
\sum_{n =1, 3, \dots } \dfrac{ 1}{n^{2k}}  = \pi^{2k}Q_{2k-1}(0).
\label{eq:induction4}
\end{equation}

We also see that the sum of $1/n^{2k}$ over the even integers equals $\zeta(2k)/2^{2k}$. Hence, a little algebra produces \eqref{eq:zetazeta} and the proof is complete.
\end{proof}

\section{Remarks}
 
The derivations are successive iterations of a single construction: each result
followed from the previous by termwise integration. The cosine
series of Theorem~\ref{thm:main} is integrated to the sine series in Corollary~\ref{cor:sinseries},
and then to a fourth-order cosine series in Corollary~\ref{cor:cosd}, and Theorem~\ref{thm:zeta2k} continues the ladder to the even values $\zeta(2k)$.
The odd-exponent rungs vanish at $d=0$ and
yield closed forms only at the shifts $d=1/4,1/6,1/8$, where the odd Dirichlet characters are 
reproduced. The odd-exponent values from this
method create series like $\beta(3)$, but unfortunately 
never $\zeta(3)$ because this would require the sum coefficients to all be +1, but this cannot 
be done with a $\sin(2\pi n d)$ in the summand for any $d$.
Both the Basel
sum and the tanh expansion are special cases of a single
identity. Setting $d=0$ gives the Mittag-Leffler expansion
of Corollary~\ref{cor:tanh}, while $z\to0^{+}$ gives the cosine
series~\eqref{eq:equal} and hence $\zeta(2)$, so the decaying-signum window of the square wave is the common source of both.

\noindent\textbf{Summary}\hspace{2em}
  We window a delayed periodic square wave with a decaying signum function and evaluate the product at zero frequency in both the time and frequency domains. The window renders every series that appears absolutely convergent, so the argument requires only elementary integrals and a limit. We
  obtain new unified derivations for the Mittag-Leffler partial-fraction expansion for the hyperbolic tangent, the Fourier expansion 
  of the first Euler polynomial $E_1$, and the Basel sum $\zeta(2)$. The same calculation, followed by termwise integration, yields the Dirichlet beta value $\beta(3)=\pi^{3}/32$, and the values at $s=3$ of two other Dirichlet $L$-functions. Termwise integration of this sine series produces $\zeta(4)$, and an induction continues the process to $\zeta(2k)$ for every $k\ge1$.

\end{document}